\documentclass[reqno, 11pt]{amsart}
\usepackage{url}
\usepackage[colorlinks, linkcolor=blue, citecolor=blue, urlcolor=blue]{hyperref}
\usepackage{times}

\newtheorem{theorem}{Theorem}[section]
\newtheorem{lemma}[theorem]{Lemma}

\theoremstyle{definition}

\newtheorem{remark}[theorem]{Remark}

\numberwithin{equation}{section}

\usepackage{bbm}
\usepackage{url}
\usepackage{euscript}
\usepackage{amsmath}
\usepackage{amsthm}

\usepackage{amsxtra}
\usepackage{amssymb}
\usepackage{pifont}
\usepackage{amsbsy}

\usepackage{graphicx}
\usepackage{epstopdf}

\usepackage{tikz}
\usepackage{pgfplots}
\pgfplotsset{compat=1.18}
\usetikzlibrary{arrows.meta}

\usepackage[backend=biber,style=numeric,doi=true,url=false,eprint=true]{biblatex}
\DeclareFieldFormat{title}{\mkbibemph{#1}}
\DeclareFieldFormat[article]{title}{\mkbibemph{#1}}
\DeclareFieldFormat[misc]{title}{\mkbibemph{#1}}

\DeclareFieldFormat{date}{#1}
\DeclareFieldFormat{year}{#1}
\DeclareFieldFormat[article]{issue}{#1}

\renewbibmacro*{date}{%
  \printtext[parens]{\printdate}%
}

\renewbibmacro*{issue+date}{%
  \printtext[parens]{\printdate}%
  \newunit}

\newskip\aline \newskip\halfaline
\title[Half Strong Ill-Posedness of $2 \frac{1}{2}$D electron MHD with Fractional Resistivity]{Half Strong Ill-Posedness of $2 \frac{1}{2}$D Electron Magnetohydrodynamics with Fractional Resistivity}

\author{Xiaotong (Dawson) Yang}
\address{Xiaotong (Dawson) Yang: Department of Mathematics, Statistics, and Computer Science, University of Illinois Chicago, Chicago, IL 60607, USA}
\email{xyang212@uic.edu}
\urladdr{\url{https://homepages.math.uic.edu/~xyang212/}}

\author{Haoming Zhu}
\address{Haoming Zhu: Department of Mathematics, Statistics, and Computer Science, University of Illinois Chicago, Chicago, IL 60607, USA}
\email{hzhu54@uic.edu}
\urladdr{\url{https://sites.google.com/view/haomingzhu/home}}

\subjclass{35Q35, 76D03, 76E25, 76W05}
\keywords{magnetohydrodynamics; norm inflation; ill-posedness}

\date{Last updated: \today}

\begin{document}

\begin{abstract}
We study the $2\frac{1}{2}$D electron magnetohydrodynamics (MHD): the electron MHD system that has $3$D magnetic field but is independent of $z$-variable. We establish a “half” strong ill-posedness result in $2\frac{1}{2}$D electron MHD with fractional resistivity $(-\Delta)^\alpha$ in the supercritical Sobolev space $H^{\beta}\times H^{\beta-1}$ for $3<\beta<4-2\alpha$. Specifically, we construct small initial data $(a_0,b_0)$ whose solution develops a norm inflation in $a$ but the norm of $b$ remains small.
\end{abstract}

\maketitle
  
\tableofcontents

\section{Introduction}
Consider the standard electron magnetohydrodynamics (MHD) system
\begin{equation}\label{eq:EMHD}
\begin{cases}
B_t +\nabla \times ((\nabla\times B)\times B) = \mu \Delta B, \\
\nabla \cdot B = 0
\end{cases}
\end{equation}
In physics, (\ref{eq:EMHD}) describes the evolution of the magnetic field in plasmas whose particle velocity is so small that it may be neglected. In fact, considering the fast changing rate of the magnetic field, the electron MHD system may be viewed as a good approximation to the real plasma. Moreover, the Hall term $\nabla \times ((\nabla\times B)\times B)$ helps physicists understand many interesting phenomena in magnetohydrodynamics, including magnetic reconnection (see, e.g. \cite{hallreconnection}). It is more singular than the Euler-type nonlinearity $B \cdot \nabla B$, thus making it more challenging to estimate mathematically.

If one further assumes the plasma is uniform in one direction ($z$-direction for example), like in a long tube or circulated tube (Tokamak), then the so called $2\frac{1}{2}$D electron MHD system comes into play. In fact, consider a planar magnetic potential $A=A(t,x,y)$ independent of $z$-variable and its magnetic field: 
\[
B = \nabla \times A(t,x,y) = (\frac{ \partial A_z}{\partial y},-\frac{\partial A_z}{\partial x}, \frac{\partial A_y}{\partial x}-\frac{\partial A_x}{\partial y}).
\]
Denote $a=-A_z$ and $b=\frac{\partial A_y}{\partial x}-\frac{\partial A_x}{\partial y}$. Note that $B:\mathbb R_t \times \mathbb R_{(x,y)}^2 \to \mathbb R^3$ has the following structure: $B=\nabla \times (ae_z) + b\,e_z$. Plugging this expression of $B$ in (\ref{eq:EMHD}), we derive the $2\frac{1}{2}$D electron MHD system:
\begin{equation}\label{eq:25d}
\begin{cases}
a_t + \nabla^\perp b\cdot \nabla a  = \mu\Delta a,\\[2pt]
b_t + \nabla^{\perp}a\cdot\nabla \Delta a = \mu\Delta b.
\end{cases}
\end{equation}
Considering $2\frac{1}{2}$D electron MHD as a reduced model of electron MHD and Hall MHD (where plasma particles' velocity is not small), its ill-posedness may contribute to understanding the ill-posedness of more general models. In this paper, we consider the generalized $2\frac{1}{2}$D electron MHD system with fractional magnetic resistivity:
\begin{equation}\label{eq:resistive-25d}
\begin{cases}
a_t + \nabla^\perp b\cdot \nabla a + \mu(-\Delta)^{\alpha} a = 0,\\[2pt]
b_t + \nabla^{\perp}a\cdot\nabla \Delta a + \nu(-\Delta)^{\alpha} b = 0,
\end{cases}
\end{equation}
where $\mathcal{F}((-\Delta)^\alpha f)(\xi)=|\xi|^{2\alpha}\mathcal{F}(f)(\xi)$.

Chae, Wan, and Wu \cite{Chae-Wan-Wu} showed that the Hall-MHD system is locally well-posed in $H^{1+d/2+}(\mathbb R^d)$ when there is no dissipation in velocity but a fractional magnetic resistivity $(-\Delta)^\alpha B$ with $\alpha>1/2$. (Here, we use $H^{1+d/2+}$ to denote the Sobolev spaces $H^\beta$ with $\beta>1+d/2$. For the rest of the paper it is similar.) Following it, there has been a surge of papers on fractionally resistive Hall-MHD, most of them on locally well-posedness. Later Dai \cite{daiLWP} studied Hall MHD with full dissipation in velocity and fractional magnetic resistivity, and proved local well posedness in $H^{2+d/2-2\alpha+}(\mathbb R^d)$ but still with the restriction $\alpha > 1/2$. Also, Zhang and Zhao \cite{ZhangZhao} looked into general Hall MHD with velocity dissipation $(-\Delta)^\beta u$ such that $0\le\beta\le1$, and proved global well-posedness when magnetic resistivity $\alpha = 1/2$ in $H^{5/2+}(\mathbb R^3)$ under small initial data. However, the local well-posedness of Hall or electron MHD when $\alpha = 1/2$ remains an open problem.

As for the ill-posedness of Hall/electron/$2\frac{1}{2}$D electron MHD, the main focus is on the inviscid case. In sequential works \cite{Jeong-Oh-2.5} \cite{Jeong-Oh-Ill} by Jeong and Oh, non-existence of solution to inviscid electron MHD in $H^{7/2+}(\mathbb R^3)$ is established. And later Dai \cite{dai2024} established ill-posedness of inviscid $2\frac{1}{2}$D electron MHD through a norm inflation in $H^{\beta}(\mathbb R^2)\times H^{\beta-1}(\mathbb R^2)$ (for $a$ and $b$ respectively) with $1<\beta<4$. For electron MHD with fractional resistivity, Jeong and Oh \cite{Jeong-Oh-2.5} established strong ill posedness of electron MHD in $H^{s}$ for $s\ge 3$ with $0\le \alpha < 1/2$.

Returning to the $2\frac{1}{2}$ electron MHD system with fractional dissipation (\ref{eq:resistive-25d}), note that the system is invariant under scaling:
\begin{align*}
    a(t,x) &\to \lambda^{2\alpha-3}a(\lambda^{2\alpha}t,\lambda x) \\
    b(t,x) & \to \lambda^{2\alpha-2}b(\lambda^{2\alpha }t, \lambda x)
\end{align*}
whose critical Sobolev spaces (i.e. norms are invariant under scaling) are $a\in \dot{H}^{4-2\alpha}(\mathbb{R}^2)$ and $b\in \dot{H}^{3-2\alpha}(\mathbb{R}^2)$. So we shall expect ill-posedness in supercritical space $\dot H^{4-2\alpha-}\times \dot H^{3-2\alpha-}$. In this paper, we establish the strong ill-posedness of (\ref{eq:resistive-25d}), inspired by Dai \cite{dai2024} and the work of Guo and Luo \cite{guo2025norminflationcriticalsqg} in SQG.

\begin{theorem}[Half strong ill-posedness in generalized $2\frac{1}{2}$D electron MHD]\label{thm:main}
Let $3<\beta<4-2\alpha$ and $0\le\alpha<1/2$, then for any small $\epsilon>0$, any large $\Lambda>0$, and any small time $t_*>0$, there exist smooth initial data $(a_0,b_0)$ such that,
\[
\|a_0\|_{H^\beta}+\|b_0\|_{H^{\beta-1}} < \epsilon,
\]
such that the generalized $2\frac{1}{2}$D electron MHD system \eqref{eq:resistive-25d} has a solution whose norm inflates
\[
\|a(t_*)\|_{\dot H^\beta}+\|b(t_*)\|_{\dot H^{\beta-1}}>\|a(t_*)\|_{\dot H^\beta}\ >\ \Lambda,
\]
while the norm of $b$ remains small:
\[
\|b(t)\|_{H^{\beta-1}}<\epsilon, \quad \forall \ t \in [0,t_*].
\]
\end{theorem}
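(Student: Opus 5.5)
Our plan follows Dai's mechanism for the inviscid $2\frac12$D electron MHD, using a shear-type background in $b$ that transports a high-frequency wave in $a$, together with the perturbative scheme of Guo and Luo for critical SQG. The key structural observation is this: if $b$ depends only on $y$, then $\nabla^\perp b=(-\partial_y b,0)$ and the $a$-equation reads
\[
a_t-\partial_y b(y)\,\partial_x a+\mu(-\Delta)^\alpha a=0 .
\]
This is a shear flow in $x$ with velocity $-b'(y)$. It tilts a wave packet $a\sim \sin(Nx)$ into $\sin(N(x+tb'(y)))$. The $y$-frequency then grows like $Ntb''(y)$, so $\|a\|_{\dot H^\beta}$ grows like $(Nt|b''|)^\beta$ on the support, while dissipation damps at rate $\sim (Nt)^{2\alpha}$, which is weaker since $2\alpha<1$.

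\textbf{Construction.} We take $b_0=\delta\,\phi(y)\chi(x/L)$, with $\phi$ locally quadratic, $\phi(y)=y^2/2$ near $0$ (so that $b''=1$ there), and $L$ huge so that $b_0$ is essentially $x$-independent on the relevant region. We take $a_0=N^{-\beta}\eta\,\sin(Nx)\psi(x,y)$ with a fixed bump $\psi$, so $\|a_0\|_{H^\beta}\lesssim\eta$. The smallness $\|b_0\|_{H^{\beta-1}}\lesssim\delta<\epsilon$ holds because $\phi$ is smooth and compactly supported in $y$.

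\textbf{Approximate solution.} The approximate pair is $(\bar a,\bar b)$. Here $\bar b(t)=e^{-\nu t(-\Delta)^\alpha}b_0\approx b_0$. The function $\bar a$ solves the linear transport-diffusion equation above with frozen $\bar b$, and we compute it explicitly via the Lagrangian map $x\mapsto x+t\delta y$. We get $\|\bar a(t_*)\|_{\dot H^\beta}\gtrsim \eta(\delta t_*)^\beta e^{-C(N\delta t_*)^{2\alpha}t_*}$ up to $N$-independent factors. Since $\beta>2\alpha$, more precisely since the exponent $2\alpha<1$ makes the damping subexponential in the stretched frequency, one picks $N$ large to make this exceed $\Lambda$. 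To do this we take $\delta t_*$ fixed small but $N$-growth entering through the frequency $N\delta t_*$. We should write the norm as $N^{-\beta}\eta (N\delta t_*)^{\beta}$, i.e. $\eta(\delta t_*)^\beta$ times the growth of $y$-frequency relative to $N$. To obtain genuine inflation we instead choose $\phi$ with large curvature on a thin layer, or equivalently let the stretching time be long relative to the shear. Following Dai, we accept a factor $(Nt_*)^{\beta-?}$ by starting $a_0$ at frequency $N$ in $x$ only, and we measure that $\partial_y^\beta$ produces $(Nt_*\delta)^\beta$ while the initial norm is $N^\beta$. Inflation then requires $\delta t_*\gg1$ in a rescaled sense. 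We handle this with the scaling symmetry: rescaling by $\lambda$ converts the small $t_*$ into a long effective time while keeping $b$ small exactly because $\beta-1<3-2\alpha$, i.e. $\beta<4-2\alpha$ (supercriticality). This is where the hypothesis enters.

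\textbf{Error control (main obstacle).} Set $a=\bar a+\tilde a$ and $b=\bar b+\tilde b$. The dangerous feedback is $\nabla^\perp a\cdot\nabla\Delta a$ in the $b$-equation, which is quadratic in the growing $a$ and costs three derivatives. For $\sin(N\cdot)$-type $\bar a$, the leading part of $\nabla^\perp\bar a\cdot\nabla\Delta\bar a$ is essentially a total derivative with oscillation at frequency $2N$ plus a mean term. We will show its contribution to $\tilde b$ in $H^{\beta-1}$, and in the Lipschitz-type norm of $\nabla\tilde b$ needed to transport $a$, is $O(\eta^2 N^{4-2\beta}\cdot\mathrm{poly}(Nt_*))$. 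With $\beta>3$ this term is small after choosing $\eta$ small relative to the growth, which keeps $\|b(t)\|_{H^{\beta-1}}<\epsilon$. We then run an energy estimate for $\tilde a$ in $L^2$ and $H^\beta$. It is a linear transport by $\bar b+\tilde b$ with forcing $\nabla^\perp\tilde b\cdot\nabla\bar a$, and a Grönwall argument over $[0,t_*]$ controls it. We expect the bookkeeping of the powers of $N$, matching $\beta>3$ (so the Hall feedback is subdominant) against $\beta<4-2\alpha$ (so the inflation beats the dissipation and the scaling), to be the delicate step. If the direct estimate fails, we will use a bootstrap on $\|\tilde b\|_{W^{2,\infty}}$, together with local existence of smooth solutions for smooth data on the short interval, to close it.
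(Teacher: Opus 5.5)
Your proposal has a genuine gap: the overall architecture matches the paper's, but the quantitative core is never supplied, and several claims made in its place are incorrect. The shared architecture is a shear generated by $b_0$ stretching a high-frequency wave in $a$, an explicit transported approximation $\bar a$, the vanishing of the leading part of $\nabla^\perp\bar a\cdot\nabla\Delta\bar a$, and a perturbative energy argument.

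Your planar shear in place of the paper's differential rotation ($b_0=\epsilon\lambda^{2-\beta}h(\lambda r)$, $a_0=\epsilon\lambda^{1-\beta}N^{-\beta}g(\lambda r)\cos(N\theta)$) is not the problem. The radial choice is cleaner, though: $b_0$ is compactly supported and the amplitude $g(\lambda r)$ is invariant under the rotation. With $\chi(x/L)$ you instead get $\|b_0\|_{H^{\beta-1}}\sim\delta L^{1/2}$ rather than $\lesssim\delta$, and an amplitude that is sheared and spreads.

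The incorrect claims are these. At unit scale with $\delta<\epsilon$ and $t_*$ small there is no inflation, as you notice. Taking $N$ large only decreases your bound $\eta(\delta t_*)^\beta e^{-C(N\delta t_*)^{2\alpha}t_*}$. Dissipation is not harmless ``since $2\alpha<1$'': at a fixed spatial scale a wave at frequency $\gtrsim N$ is damped roughly like $e^{-c\mu N^{2\alpha}t}$, which kills it as $N\to\infty$ for every $\alpha>0$. In the paper, $\alpha<1/2$ is used only to make $3<\beta<4-2\alpha$ nonempty. You then list alternatives (thin layer, long stretching time, rescaling) without fixing one. So no choice of parameters is exhibited for which small data, inflation at time $t_*$, and the error bounds hold simultaneously.

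The missing ingredient is the paper's parameter hierarchy.

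\emph{Inflation with small data.} Localize at scale $\lambda^{-1}$ with angular shear rate $\simeq\epsilon\lambda^{4-\beta}$. This keeps $\|b_0\|_{H^{\beta-1}}\lesssim\epsilon$, while at $t_*=\epsilon^{-2}\lambda^{\beta-4}\to0$ the total stretching is $\epsilon^{-1}$, so $\|\bar a(t_*)\|_{\dot H^\beta}\simeq\epsilon^{1-\beta}$. The price is a Gr\"onwall factor $e^{Ct_*\|\nabla u_0\|_{L^\infty}}=e^{C/\epsilon}$, which is large but independent of $\lambda$ and $N$.

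\emph{Smallness from $N$, where $\beta>3$ enters.} With $\epsilon$ frozen, all smallness comes from $N\to\infty$. The paper bootstraps $\|A\|_{\dot H^s}\lesssim\lambda^{s-\beta}N^{s-\beta-1}$ for $A=a-\bar a$ in every $\dot H^s$, one power of $N$ below $\bar a$. The Hall feedback enters with the factor $N^{6-2\beta}$ relative to this bound and gives $\|u(t)\|_{H^{\beta-2}}\lesssim\|u_0\|_{H^{\beta-2}}+N^{3-\beta}$; this is where $\beta>3$ is used. Making $\eta$ small instead does not help: less amplitude needs more stretching to reach $\Lambda$, which enlarges both the feedback and the Gr\"onwall factor.

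\emph{Dissipation, where $\beta<4-2\alpha$ enters.} $\lambda$ is tied to $N$ by $\lambda^{\beta+2\alpha-4}=N^{-100}$. Then the dissipative forcing $\mu t_*\|\Lambda^{2\alpha+s}\bar a\|_{L^2}$ lies below the bootstrap bound by the factor $\lambda^{\beta+2\alpha-4}N^{1+2\alpha}$.

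Your fallback bootstrap on $\|\tilde b\|_{W^{2,\infty}}$ alone would not close either. The forcing $\nabla^\perp\tilde b\cdot\nabla\bar a$ in $\dot H^\beta$ and the term $\nabla^\perp a\cdot\nabla\Delta\tilde a$, where no orthogonality is available, require control of the errors in high Sobolev norms. That is why the bootstrap has to be on $A$ across all $\dot H^s$.
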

\begin{remark}
The result is named ``half" strong ill-posedness because of the smallness of $\|b\|_{H^{\beta-1}}$. In fact, if every dissipation term is dropped in our proof of the theorem below, Theorem \ref{thm:main} extends to inviscid $2\frac{1}{2}$D electron MHD, giving half strong ill-posedness when $3<\beta<4$.
\end{remark}
\begin{remark}\label{remark-comparison-jeong-oh}
We may emphasize that our result is different to the result by Jeong and Oh \cite{Jeong-Oh-2.5}, where ill posedness of \textit{3D} electron MHD is established in $H^s$ for $s\ge 3$ with $0\le \alpha < 1/2$. Recall under the $2\frac{1}{2}$D symmetry (i.e. $z$-independence of $B$), we can write $B = (\nabla \times ae_z)+be_z$, and Theorem \ref{thm:main} suggests ill posedness for $B \sim (\nabla a,b)$ in $H^s$ for $2<s<3-2\alpha$. Since the $2\frac{1}{2}$D symmetry is preserved under evolution of electron MHD, we may interpret Theorem \ref{thm:main} as an ill posedness result in $\mathbb R^2_{(x,y)} \times \mathbb T_z$ by setting $B(t,x,y,z) \equiv B(t,x,y)$.
\end{remark}
\begin{remark}
Although our theorem may be viewed as a small improvement of result by Jeong and Oh \cite{Jeong-Oh-2.5} as mentioned in Remark \ref{remark-comparison-jeong-oh}, we regret to point out that our result has yet to overcome the barrier of $\alpha = 1/2$, which may need more delicate estimate of error.
\end{remark}

Also, let us clarify the notion of strong ill-posedness in this paper. Consider the ill-posedness problem in space $X$ of a PDE with solution $u$, we say that the PDE is strongly ill-posed under the first definition below:
\begin{itemize}
    \item (Strong ill-posedness) For any small $\epsilon>0$ and $t_*>0$, and any large $\Lambda>0$, we can find initial data $u_0$ such that $\|u_0\|_X < \epsilon$, and its solution $u$ inflates in a short time: $\|u(\cdot,t_*)\|_X > \Lambda$.
    \item (Non-existence) For any small $\epsilon>0$ and $t_*>0$, we can find initial data $u_0$ such that $\|u_0\|_X < \epsilon$ whose solution is not in the space $X$: $\|u(\cdot,t)\|_X = \infty \ \ \forall t \in(0,t_*]$ .
\end{itemize}
In this paper, we proved the strong ill-posedness as defined above. Non-existence is even \textit{stronger} than \textit{strong} ill-posedness, but our approach stops at strong ill-posedness without extending to non-existence. This is because typically one gets non-existence from strong ill-posedness by a gluing argument (see, e.g. \cite{bourgainLi15} for Euler and \cite{córdoba2025strongillposednessnonexistencesobolev} for gSQG). However, our initial data is a single profile without gluing. It is very possible that one could use the translational invariance of electron MHD to gain non-existence by gluing a sequence of our initial data apart from each other, but we are not going to pursue it here.

We may outline the ideas in the proof of Theorem \ref{thm:main}. It is well known that if one has a non-Lipchitz drift term (in our case, it is $u := \nabla^\perp b$), then regularity tends to be lost (see, e.g. the classical works \cite{Diperna1989OrdinaryDE} and \cite{Diperna1987OscillationsAC}). So, we will construct small initial data with $u_0=\nabla^\perp b_0$ not Lipschitz. After that, we will construct approximation solutions $\bar a$ and $\bar u = \nabla^\perp \bar b$ and show the norm inflation of $\bar a$ and the smallness of $\bar u$. Lastly, we will show the difference between the real solution and the approximation $\bar a-a$ is controlled and that $u$ is small.

\medskip
\noindent\textbf{Acknowledgments.}   H.Z. is partially supported by the National Science Foundation grant DMS-2308208. The authors are also grateful to discussions with Mimi Dai.

\section{Preliminaries} \label{sec-prelim}
\noindent\textbf{Notations.}
We may write $f\lesssim g$ if $f < C g$ up to a constant $C$. For $f \gtrsim g$ it is similar. If $g \lesssim f \lesssim g$, then we will write $f \simeq g$. We will also write $\lambda, N \gg 1$ if parameters $\lambda, N$ are arbitrarily large.

\noindent\textbf{Sobolev space and polar gradient.}
We may recall the following definition of Sobolev space and norm
\[
\|f\|_{W^{s,p}} = \|J^s f\|_{L^p}
\]
where $J$ is differential operator with $\mathcal F (Jf)(\xi)= \sqrt{1+|\xi|^2}\mathcal F (f)(\xi)$ and $\mathcal F (f)$ is the Fourier transform of $f$. If we replace $J$ by $\Lambda$ such that $\mathcal F(\Lambda f)= |\xi|\mathcal F (f)(\xi)$, then we have the homogeneous Sobolev norm
\[
\|f\|_{\dot W^{s,p}} = \|\Lambda^s f\|_{L^p}.
\]
In particular, when $p=2$, we write $H^s := W^{s,2}$ and $\dot H^s := \dot W^{s,2}$. Also, on $\mathbb R^2$ we may recall the gradient in polar coordinate
\[
\nabla f = \partial_r f e_r + \frac{\partial_\theta f}{r} e_\theta.
\]

\noindent\textbf{Basic energy estimate.}
By a standard estimate of $2\frac{1}{2}$D electron MHD (\ref{eq:resistive-25d}) using divergence free of drift velocity $\nabla^\perp b$, we have $a \in L^\infty_tL^2_x$. For higher norm, we also has a energy estimate:
\begin{equation}\notag
\begin{split}
&\frac12\frac{d}{dt}\int_{\mathbb R^2}|\nabla a|^2+|b|^2\,dxdy+\int_{\mathbb R^2}\mu|\Lambda^{\alpha+1} a|^2+\nu|\Lambda^{\beta} b|^2\,dxdy\\
=& \int_{\mathbb R^2}\nabla^\perp b\cdot\nabla a\Delta a\, dxdy+ \int_{\mathbb R^2}(\nabla^\perp \Delta a\cdot\nabla a) b\, dxdy\\
=&-  \int_{\mathbb R^2} b\nabla a\cdot \nabla^\perp\Delta a\, dxdy+ \int_{\mathbb R^2}(\nabla^\perp \Delta a\cdot\nabla a) b\, dxdy\\
=&\ 0.
\end{split}
\end{equation}
Hence we have a priori estimate $(a, b)\in L^\infty_t(H^1_x \times L^2_x)$. Therefore, we should study norm inflation in $H^\beta\times H^{\beta-1}$ when $\beta>1$.

\section{Initial data, approximate solution, and approximate norm inflation}
From now on, we will denote $u :=\nabla^\perp b$ as the drift velocity, and then study the following,
\begin{equation}\label{eq:resistive-25d-u}
\begin{cases}
a_t + u\cdot \nabla a + \mu(-\Delta)^{\alpha} a = 0,\\[2pt]
u_t + \nabla^\perp(\nabla^{\perp}a\cdot\nabla \Delta a) + \nu(-\Delta)^{\alpha} u = 0.
\end{cases}
\end{equation}

\medskip
\noindent\textbf{Choice of initial data.}
Assume $\beta < 4-2\alpha$ and let $\lambda\gg1$ and $N \gg 1$ be large parameters. In fact, one may relate $\lambda$ to $N$ using powers
\[
\lambda^{\beta+2\alpha-4}=N^{-100}.
\]
Pick smooth radial cutoffs $g \in C_c^\infty ((2,3))$ and $h\in C_c^\infty((1,4))$ with $h',h'' > 0$ on $[2,3]$. In polar coordinates $(r,\theta)$ set
\begin{equation}\label{eq:ID}
\begin{split}
&a_{0}(r,\theta)= \epsilon\lambda^{1-\beta}N^{-\beta}g(\lambda r)\cos\left( N \theta \right), \qquad b_{0}(r,\theta)=\epsilon\lambda^{2-\beta}h(\lambda r), \\
&u_0:=\nabla^\perp b_0=\partial_r b_0\,e_\theta=\epsilon\lambda^{3-\beta}h'(\lambda r)e_{\theta}.
\end{split}
\end{equation}
\begin{lemma}[Initial data bounds]\label{est-u0}
Let $s\ge 0$ and let $(a_0,b_0,u_0)$ be defined by \eqref{eq:ID}, then, uniformly in $\lambda,N\gg 1$,
\begin{equation}\label{eq:est-u0}
\|a_0\|_{H^s}\ \lesssim\ \epsilon\lambda^{s-\beta}N^{s-\beta},
\qquad
\|u_0\|_{H^s}\ \lesssim\ \epsilon\lambda^{s+2-\beta},
\qquad
\|u_0\|_{C^1}\ \simeq\ \epsilon\lambda^{4-\beta}.
\end{equation}
\end{lemma}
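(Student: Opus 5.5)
The approach is scaling plus interpolation between integer Sobolev norms. Everything is reduced to the unscaled profiles $A(x)=g(|x|)\cos(N\theta)$ and $H(x)=h(|x|)$, which are fixed in $x$ and depend on the parameter $N$ only through the angular factor. Since $a_0(x)=\epsilon\lambda^{1-\beta}N^{-\beta}A(\lambda x)$, the homogeneous norm scales as
\[
\|a_0\|_{\dot H^s}=\epsilon\lambda^{1-\beta}N^{-\beta}\lambda^{s-1}\|A\|_{\dot H^s}.
\]
In two dimensions the $L^2$ normalisation contributes the factor $\lambda^{-1}$. Likewise $u_0(x)=\epsilon\lambda^{3-\beta}U(\lambda x)$ with $U=h'(|x|)e_\theta$, so $\|u_0\|_{\dot H^s}=\epsilon\lambda^{2-\beta+s}\|U\|_{\dot H^s}$.

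\textbf{Bounding $A$.} It then suffices to show $\|A\|_{\dot H^s}\lesssim N^s$ for $s\ge0$. For integer $s=k$ I would use the polar gradient from Section \ref{sec-prelim}. On the support $r\in(2,3)$ the factor $1/r$ is bounded with all its derivatives. Each application of $\partial_r$ or $r^{-1}\partial_\theta$ therefore produces at most one factor of $N$, coming from $\partial_\theta\cos(N\theta)$, times smooth bounded functions of $r$. By induction, $|\nabla^k A|\lesssim_k N^k\mathbbm 1_{\{2<r<3\}}$, hence $\|A\|_{\dot H^k}\lesssim N^k$. For non-integer $s$, the interpolation inequality $\|f\|_{\dot H^s}\le\|f\|_{\dot H^{k}}^{1-\theta}\|f\|_{\dot H^{k+1}}^{\theta}$ with $s=k+\theta$ gives $\|A\|_{\dot H^s}\lesssim N^s$.

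\textbf{Inhomogeneous norm of $a_0$.} The target $\|a_0\|_{H^s}\lesssim\epsilon\lambda^{s-\beta}N^{s-\beta}$ is then compared with
\[
\|a_0\|_{H^s}\simeq\|a_0\|_{L^2}+\|a_0\|_{\dot H^s}\lesssim\epsilon\lambda^{-\beta}N^{-\beta}+\epsilon\lambda^{s-\beta}N^{s-\beta}.
\]
Since $\lambda,N\ge1$, the second term dominates, which gives the bound.

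\textbf{Bounds on $u_0$.} The same argument applies to $U$ and is simpler, because there is no $N$: $\|U\|_{H^s}\lesssim_s1$. With $\lambda\ge1$ this gives $\|u_0\|_{H^s}\lesssim\epsilon\lambda^{s+2-\beta}$. For the $C^1$ norm, $\|u_0\|_{L^\infty}=\epsilon\lambda^{3-\beta}\|h'\|_\infty$ and $\|\nabla u_0\|_{L^\infty}=\epsilon\lambda^{4-\beta}\|\nabla U\|_{L^\infty}$. The upper bound $\lesssim\epsilon\lambda^{4-\beta}$ follows. For the lower bound, $\nabla U$ contains the radial derivative $\partial_r(h'(r))=h''(r)$ of the $e_\theta$-component, and $h''>0$ on $[2,3]$. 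So $\|\nabla U\|_{L^\infty}\ge c>0$, independent of all parameters, which gives $\|u_0\|_{C^1}\gtrsim\epsilon\lambda^{4-\beta}$.

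\textbf{Main point of care.} The only nontrivial step is the claim that the $\dot H^s$ norms of $A$ grow exactly like $N^s$, uniformly in $N$. This needs the support of $g$ to be away from the origin, so that $r^{-1}\partial_\theta$ is harmless, together with the interpolation step for fractional $s$. Everything else is bookkeeping of powers of $\lambda$.
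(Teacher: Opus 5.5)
Your proof is correct and follows essentially the paper's route: count one factor of $N$ per derivative (the paper counts $\lambda N$ directly at scale $\lambda^{-1}$, while you first remove $\lambda$ via the exact scaling $\|f(\lambda\cdot)\|_{\dot H^s}=\lambda^{s-1}\|f\|_{\dot H^s}$), pass from $L^\infty$ to $L^2$ using the annular support, interpolate between integer orders, and get the $C^1$ lower bound from $h''>0$. Scaling out $\lambda$ first is only a bookkeeping variant. It has the side benefit of giving the correct exponent $\epsilon\lambda^{m+2-\beta}$ for $\|D^m u_0\|_{L^2}$, which matches the statement, whereas the paper's intermediate display drops the factor $\lambda^{2}$.
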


\begin{proof}
Note the support of $g(\lambda r)$ and $h(\lambda r)$ is on the annulus $r \sim \lambda^{-1}$. So $|\text{supp }a_0| = |\text{supp }u_0| \simeq \lambda^{-2}$. It can be easily check that
\[
\|a_0\|_{L^\infty} \simeq \epsilon \lambda^{1-\beta} N^{-\beta}, \qquad \|u_0\|_{L^\infty} \simeq \epsilon \lambda^{3-\beta}.
\]
With $\nabla = \partial_r + \frac{\partial_\theta}{r} \sim \partial_r + \lambda\partial_\theta$ due to $r \sim \lambda^{-1}$, each derivative on $a_0$ gains a factor of $\lambda N$ and each derivative on $u_0$ gains a factor of $\lambda$. To be more precise,
\[
\|\nabla a_0\|_{L^\infty} \lesssim \epsilon \lambda^{2-\beta} N^{1-\beta}, \qquad \|\nabla u_0\|_{L^\infty} \simeq \epsilon \lambda^{4-\beta},
\]
where we use $\lesssim$ instead of $\simeq$ in the inequality of $a_0$ since we do not exclude higher derivatives of $g$ being zero. Adding two estimates of $u_0$, we proved $\|u_0\|_{C^1}\simeq\epsilon\lambda^{4-\beta}$. More generally,
\[
\|D^m a_0\|_{L^\infty} \lesssim \epsilon \lambda^{1+m-\beta} N^{m-\beta}, \qquad \|D^m u_0\|_{L^\infty} \lesssim \epsilon \lambda^{3+m-\beta}.
\]
Then using $|\text{supp}| \sim \lambda^{-2}$ and $L^2$-$L^\infty$ inequality yields
\begin{align*}
\|D^m a_0\|_{L^2} \ &\le (\lambda^{-2})^{1/2}\|D^m a_0\|_{L^\infty} \lesssim \epsilon \lambda^{m-\beta}N^{m-\beta} \\
\|D^m u_0\|_{L^2} \ &\le (\lambda^{-2})^{1/2}\|D^m u_0\|_{L^\infty} \lesssim \epsilon \lambda^{m-\beta}.
\end{align*}
Add from $0$ to $m$, we have
\[
\|a_0\|_{H^m} \lesssim \epsilon \lambda^{m-\beta}N^{m-\beta}, \qquad \|u_0\|_{H^m} \lesssim \epsilon \lambda^{m-\beta}.
\]
For non-integer $s$, interpolating the above estimates concludes the proof.
\end{proof}
Notice that as $\lambda \to \infty$, the $C^1$ norm of $u_0$ will go to infinity thus will be non-Lipchitz, leading to regularity loss and our norm inflation below.

\medskip
\noindent\textbf{Approximate solution.}
We will approximate the real solution in a short time by inviscid transport. Let $\bar a$ and $\bar u$ be the solution of
\begin{equation}\label{eq:approx}
\begin{split}
&\bar a_t+u_0\cdot\nabla \bar a = 0,\qquad \bar a(0)=a_0, \\
&\bar u_t+\nabla^\perp(\nabla^\perp \bar a\cdot\nabla \Delta \bar a) = 0,\qquad \bar u(0)=u_0,
\end{split}
\end{equation}
so explicitly in polar coordinates $(r,\theta)$
\begin{equation}\label{eq:tilde-a-formula}
\bar a(t,r,\theta)=a_0\!\left(r,\ \theta-\frac{\partial_r b_0}{r}\,t\right)
= \epsilon\lambda^{1-\beta}N^{-\beta}\,
g(\lambda r)\,
\cos\Big(N\theta-\,\epsilon N\lambda^{4-\beta} t h'(\lambda r)\Big).
\end{equation}
The fast angular shear $\frac{\partial_{r}b_{0}}{r} = \epsilon \lambda^{3-\beta} \frac{h'(\lambda r)}{ r} \sim \epsilon \lambda^{4-\beta} h'(\lambda r)$ on the annulus $r \sim \lambda^{-1}$ drives norm inflation for $\bar a$ at time $t_*=\epsilon^{-2}\lambda^{\beta-4}$.
To be more precise, we have the following:
\begin{lemma}[Approximate solution inflation]\label{le-a-osc}
Set $t_* := \epsilon^{-2}\lambda^{-4+\beta}$ which goes to zero with $\lambda$ growing. For $s\ge0$, $1\le p \le \infty$, and $0\le t \le t_*$, the function $\bar a$ given by \eqref{eq:tilde-a-formula} satisfies
\begin{equation}\label{eq-abar-Wsp}
\lVert \bar{a}(t) \rVert_{\dot{W}^{s,p}} \lesssim \epsilon^{1-s} (\lambda N)^s \lambda^{1-\frac{2}{p}-\beta}N^{-\beta},
\end{equation}
and when $t=t_*$
\begin{equation}\label{eq-abar-diego}
\|\bar a(t_*)\|_{\dot{H}^s}\simeq \epsilon^{1-s}(\lambda N)^s\lambda^{-\beta}N^{-\beta}=\epsilon^{1-s}\lambda^{s-\beta}N^{s-\beta}.
\end{equation}
In particular, when $s=\beta$, for any $\Lambda>0$ there exists $\epsilon$ small such that $\|\bar a (t_*)\|_{\dot H^\beta}\simeq \epsilon^{1-\beta} > \Lambda$.
\end{lemma}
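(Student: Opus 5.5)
Since $\bar a$ is given explicitly by \eqref{eq:tilde-a-formula}, the plan is to estimate its derivatives in $L^\infty$ and then trade $L^\infty$ for $L^p$ using the support size. At time $t\le t_*$ the phase is $\Phi(t,r,\theta)=N\theta-\epsilon N\lambda^{4-\beta}t\,h'(\lambda r)$. Since $\epsilon N\lambda^{4-\beta}t\le \epsilon^{-1}N$, one has on the annulus $r\sim\lambda^{-1}$:
\[
|\lambda^{-1}\partial_r\Phi|\lesssim \epsilon^{-1}N,\qquad |r^{-1}\partial_\theta\Phi|\sim\lambda N .
\]
Hence each Cartesian derivative falling on the phase costs at most $\lambda N\epsilon^{-1}$. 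Derivatives of $g(\lambda r)$, or further derivatives of $h'(\lambda r)$ and of $1/r$, cost only $\lambda$, or $\lambda\epsilon^{-1}N$ at worst.

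By the chain rule (Faà di Bruno), the $m$-th derivative is a sum of terms in which every factor is bounded by $(\epsilon^{-1}\lambda N)$ per derivative. This gives, for $\epsilon\le1$,
\[
\|D^m\bar a(t)\|_{L^\infty}\lesssim \epsilon\lambda^{1-\beta}N^{-\beta}(\epsilon^{-1}\lambda N)^m=\epsilon^{1-m}(\lambda N)^m\lambda^{1-\beta}N^{-\beta}.
\]
Multiplying by $|\mathrm{supp}|^{1/p}\simeq\lambda^{-2/p}$ yields \eqref{eq-abar-Wsp} for integer $s=m$. For noninteger $s$ I will interpolate between $\lfloor s\rfloor$ and $\lceil s\rceil$ (Gagliardo–Nirenberg / complex interpolation of $\dot W^{s,p}$, with the endpoints $p=1,\infty$ handled through Besov-type bounds if needed). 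The bound is geometric in $s$, so interpolation preserves its form.

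For the lower bound \eqref{eq-abar-diego} at $t=t_*$, the upper bound is the case $p=2$. For the matching lower bound, first take integer $s=m$. The dominant term of $D^m\bar a$ comes from all derivatives falling on the phase in the radial direction:
\[
\partial_r^m\bar a\approx \epsilon\lambda^{1-\beta}N^{-\beta}g(\lambda r)\,\big(\epsilon^{-1}N\lambda\, h''(\lambda r)\big)^m\cos\!\big(\Phi+m\pi/2\big).
\]
Here $h''>0$ on $[2,3]\supset\mathrm{supp}\,g$, so $h''\gtrsim 1$ there. The angular contribution is of size $(\lambda N)^m\ll(\epsilon^{-1}\lambda N)^m$ when $\epsilon$ is small, and all remaining terms lose at least one factor $\epsilon^{-1}N$. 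Squaring and integrating, the $\cos^2$ averages to $1/2$ over $\theta$, so
\[
\|\partial_r^m\bar a(t_*)\|_{L^2}\gtrsim\epsilon^{1-m}(\lambda N)^m\lambda^{-\beta}N^{-\beta}.
\]
Since $\|\partial_r^m f\|_{L^2}\lesssim\|f\|_{\dot H^m}$, this gives the lower bound for integer $s$.

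For noninteger $s$, I will use the interpolation inequality $\|f\|_{\dot H^{k}}\le \|f\|_{\dot H^s}^{\theta}\|f\|_{\dot H^{k+1}}^{1-\theta}$ with integers $k<s<k+1$ chosen so that $s$ lies between $0$ and $k+1$ appropriately. Applied with the already-proven two-sided integer bounds, this reverses the interpolation into a lower bound for $\dot H^s$.

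Finally, taking $s=\beta$ gives $\epsilon^{1-\beta}$, which exceeds $\Lambda$ for $\epsilon$ small since $\beta>1$. The main obstacle is the lower bound: one must check that the leading radial term genuinely dominates the lower-order and cross terms uniformly. This is where $\epsilon$ small, together with $h''>0$ on the support of $g$, is used; $h'$ alone could give cancellations.
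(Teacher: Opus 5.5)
Your upper bound is the same as the paper's: a factor $\epsilon^{-1}\lambda N$ per derivative in $L^\infty$, then $|\mathrm{supp}\,\bar a|^{1/p}\simeq\lambda^{-2/p}$. Your Besov remark for fractional $s$ fills in a point the paper leaves implicit.

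The lower bound is where you take a genuinely different route. The paper gets it for all real $s\ge0$ at once by duality. A cited non-stationary-phase lemma (Lemma 8 of C\'ordoba et al.) gives $\|\bar a(t_*)\|_{\dot H^{-s}}\lesssim\epsilon^{1+s}\lambda^{-s-\beta}N^{-s-\beta}$. A direct computation gives $\|\bar a(t_*)\|_{L^2}\simeq\epsilon\lambda^{-\beta}N^{-\beta}$, and $\|f\|_{L^2}^2\le\|f\|_{\dot H^s}\|f\|_{\dot H^{-s}}$ finishes.

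You instead extract the leading term of $\partial_r^m\bar a(t_*)$ for integer $m$ and then interpolate. Using pure radial derivatives is a good choice. Since $\partial_r^m f=(x/|x|)^{\otimes m}:D^m f$, you get $\|\partial_r^m f\|_{L^2}\le\|f\|_{\dot H^m}$, and no angular term enters at all, so your remark about the angular contribution is moot. Your route is self-contained and avoids negative-order norms and the external lemma. The price is Fa\`a di Bruno bookkeeping and an extra interpolation step.

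The remainder terms are smaller than the leading term by a relative factor $O(\epsilon N^{-1})$. So what you actually use is $N\gg1$ together with $h''\ge c>0$ on $\mathrm{supp}\,g$, not smallness of $\epsilon$. The resulting constants are uniform in $\epsilon\le1$, which the final claim $\|\bar a(t_*)\|_{\dot H^\beta}\simeq\epsilon^{1-\beta}$ requires.

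One step must be corrected. The inequality $\|f\|_{\dot H^{k}}\le\|f\|_{\dot H^s}^{\theta}\|f\|_{\dot H^{k+1}}^{1-\theta}$ with $k<s<k+1$ is false, because $k$ does not lie between $s$ and $k+1$. Put $s$ at an endpoint instead, for instance
\[
\|f\|_{\dot H^{k+1}}\le\|f\|_{\dot H^{s}}^{\theta}\,\|f\|_{\dot H^{k+2}}^{1-\theta},\qquad \theta=\frac{1}{k+2-s},
\]
which is H\"older's inequality on the Fourier side. Combine your lower bound at order $k+1$ with the upper bound at order $k+2$. This gives $\|\bar a(t_*)\|_{\dot H^s}\gtrsim\epsilon^{1-s}\lambda^{s-\beta}N^{s-\beta}$, with constants independent of $\epsilon$, $\lambda$, $N$.
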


\begin{proof}
When $s=0$, it is trivial that
\[
\|\bar a\|_{L^\infty} \simeq \epsilon\lambda^{1-\beta}N^{-\beta}.
\]
Then, each derivative gains a factor of $|\epsilon N\lambda^{4-\beta}t_* \lambda h''(\lambda r)|\simeq_h\epsilon \lambda^{5-\beta} Nt_*=\epsilon^{-1}\lambda N$,
\[
\lVert \bar{a}(t) \rVert_{\dot{W}^{s,\infty}} \lesssim \epsilon^{1-s} (\lambda N)^s \lambda^{1-\beta}N^{-\beta},
\]
which proves \eqref{eq-abar-Wsp} when $p=\infty$. Note the $|\text{supp} \ \bar a| \simeq \lambda^{-2}$, thus
\[
\lVert \bar{a}(t) \rVert_{\dot{W}^{s,p}} \le |\text{supp}\ \bar a|^{1/p}\lVert \bar{a}(t) \rVert_{\dot{W}^{s,\infty}} \lesssim \epsilon^{1-s} (\lambda N)^s \lambda^{1-\frac{2}{p}-\beta}N^{-\beta}.
\]
So \eqref{eq-abar-Wsp} is proved. To prove \eqref{eq-abar-diego}, we utilize \cite[Lemma 8]{córdoba2023}. Take variable $\rho = \lambda r$, the lemma implies that at $t_*=\epsilon^{-2}\lambda^{\beta-4}$,
\[
\|\epsilon \lambda^{1-\beta}N^{-\beta}g(\rho)\cos(N\theta-\epsilon^{-1} Nh'(\rho))\|_{\dot H^{-s}} \lesssim (\epsilon^{-1} N)^{-s}\|\epsilon\lambda^{1-\beta}N^{-\beta}g(\rho)\|_{C^1} 
\]
\[
= \epsilon^{1+s} N^{-s}\lambda^{1-\beta}N^{-\beta}.
\]
Thus
\[
\|\bar a(t_*,r,\theta)\|_{\dot H^{-s}} =\lambda^{-s-1}\|\epsilon \lambda^{1-\beta}N^{-\beta}g(\rho)\cos(N\theta-\epsilon^{-1} Nh'(\rho))\|_{\dot H^{-s}}\lesssim \epsilon^{1+s}\lambda^{-s-\beta}N^{-s-\beta}.
\]
Note that
\begin{align*}
\|\bar a(t_*)\|_{L^2}^2 =& \epsilon^2\lambda^{2-2\beta}N^{-\beta} \int_0^{2\pi}\int_0^\infty g^2(\lambda r)\cos^2 (N\theta-\epsilon Nh'(\lambda r)) rdrd\theta \\
=& \epsilon^2\lambda^{-2\beta}N^{-\beta} \int_0^{2\pi}\int_0^\infty g^2(\rho)\cos^2 (N\theta-\epsilon Nh'(\rho)) \rho d\rho d\theta \\
\simeq& \epsilon^2\lambda^{-2\beta}N^{-\beta}
\end{align*}
which yields $\|\bar a(t_*)\|_{L^2} \simeq \epsilon \lambda^{-\beta}N^{-\beta}$. Then we have
\[
\|\bar a(t_*)\|_{\dot H^s} \gtrsim \|\bar a(t_*)\|_{\dot H^{-s}}^{-1} \|\bar a(t_*)\|_{L^2}^2 \gtrsim \epsilon^{1-s}\lambda^{s-\beta}N^{s-\beta},
\]
which completes the proof.
\end{proof}
At the moment $\epsilon$ is fixed to give small initial data and norm inflation. In the following, we will not track the dependence of $\epsilon$ and will deal with only large parameters $\lambda$ and $N$ unless otherwise stated.

\begin{lemma}[Smallness of $\bar u$]\label{small-ubar}
    For $0 \le t \le t_*$ and $\beta>2$,
    \[
    \|\bar u\|_{H^{\beta-2}} \lesssim \|u_0\|_{H^{\beta-2}}.
    \]
\end{lemma}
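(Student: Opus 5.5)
We integrate the second equation of \eqref{eq:approx} in time:
\[
\bar u(t)=u_0-\int_0^t \nabla^\perp\big(\nabla^\perp\bar a\cdot\nabla\Delta\bar a\big)(\tau)\,d\tau ,
\]
so that
\[
\|\bar u(t)\|_{H^{\beta-2}}\le \|u_0\|_{H^{\beta-2}}+t_*\sup_{0\le\tau\le t_*}\big\|\nabla^\perp\bar a\cdot\nabla\Delta\bar a\big\|_{H^{\beta-1}} .
\]
The claim then reduces to showing that the forcing term is much smaller than $\|u_0\|_{H^{\beta-2}}$. By Lemma~\ref{est-u0} we have $\|u_0\|_{H^{\beta-2}}\lesssim\epsilon\lambda^{0}=\epsilon$ up to constants; more carefully, $\|u_0\|_{H^{\beta-2}}\simeq\epsilon$, since its $L^2$-based norm at frequency $\lambda$ is $\epsilon\lambda^{\beta-2}\cdot\lambda^{2-\beta}$. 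Hence it suffices to show that $t_*\|\nabla^\perp\bar a\cdot\nabla\Delta\bar a\|_{H^{\beta-1}}\lesssim \epsilon$, uniformly for $t\le t_*$.

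The forcing term is bounded with the explicit formula \eqref{eq:tilde-a-formula} and the bounds \eqref{eq-abar-Wsp} of Lemma~\ref{le-a-osc}. We take the crude Leibniz/Kato--Ponce estimate
\[
\|fg\|_{H^{\sigma}}\lesssim \|f\|_{L^\infty}\|g\|_{H^\sigma}+\|f\|_{W^{\sigma,\infty}}\|g\|_{L^2},
\]
with $f=\nabla^\perp\bar a$ and $g=\nabla\Delta\bar a$. Since every derivative of $\bar a$ costs at most $\epsilon^{-1}\lambda N$ and $|\mathrm{supp}\,\bar a|\simeq\lambda^{-2}$, both terms are bounded by
\[
(\lambda N)^{1}\lambda^{1-\beta}N^{-\beta}\cdot(\lambda N)^{\beta+2}\lambda^{-\beta}N^{-\beta}
=\lambda^{4-\beta}N^{3-\beta}\lambda^{0}\cdot\lambda^{0},
\]
up to powers of $\epsilon$. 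Bookkeeping gives $\lambda^{1+1-\beta+\beta+2-\beta}N^{1-\beta+\beta+2-\beta}=\lambda^{4-\beta}N^{3-\beta}$. Multiplying by $t_*=\epsilon^{-2}\lambda^{\beta-4}$ yields
\[
t_*\|\nabla^\perp\bar a\cdot\nabla\Delta\bar a\|_{H^{\beta-1}}\lesssim_\epsilon N^{3-\beta}.
\]
This tends to $0$ as $N\to\infty$ precisely because $\beta>3$. The inhomogeneous lower-order parts of the $H^{\beta-1}$ norm are even smaller, since $\lambda N\gg1$.

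The main obstacle is this power counting. The naive count just described is exactly borderline in $\lambda$, and it is saved only by the factor $N^{3-\beta}$. The $\epsilon$-dependence, of order $\epsilon^{-(\beta+3)}$ times $\epsilon^2$ times $\epsilon^{-2}$, must therefore be absorbed by choosing $N$ large after $\epsilon$ is fixed. This is legitimate, as the paper states that $\epsilon$ is fixed first and only $\lambda,N$ are then sent to infinity. If the crude count turned out insufficient, I would instead exploit the structure $\nabla^\perp\bar a\cdot\nabla\Delta\bar a$: the top-order $\theta$-oscillation $\cos\cdot\sin$ cancels partly, because $\nabla^\perp\bar a\cdot\nabla$ of a function of the same phase vanishes at leading order, which gains a further factor $(\epsilon^{-1}\lambda N)^{-1}$. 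With either route, taking $N$ large makes the integral term at most $\|u_0\|_{H^{\beta-2}}$, which gives the lemma.
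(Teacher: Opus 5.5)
There is a genuine gap: as written, your argument proves the lemma only for $\beta>3$, but the lemma is stated for $\beta>2$. Your Leibniz/Kato--Ponce count gives $t_*\|\nabla^\perp\bar a\cdot\nabla\Delta\bar a\|_{H^{\beta-1}}\lesssim_\epsilon N^{3-\beta}$. For $2<\beta\le 3$ this bound does not become small as $N\to\infty$, so nothing is proved in that range. (The paper only uses the lemma in the range $3<\beta<4-2\alpha$ of Theorem~\ref{thm:main}, so your version would be enough downstream, but not for the statement itself.)

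The missing step is the one you mention only as a contingency, and it is the heart of the paper's proof. Write $\bar a=\epsilon\lambda^{1-\beta}N^{-\beta}g(\lambda r)\cos(Nf)$ with $f=\theta-\epsilon\lambda^{4-\beta}t\,h'(\lambda r)$. The top-order parts of $\nabla^\perp\bar a$ and $\nabla\Delta\bar a$ are multiples of $g\sin(Nf)\,N\nabla^\perp f$ and $g\sin(Nf)\,N^3|\nabla f|^2\nabla f$. Their product vanishes identically because $\nabla^\perp f\cdot\nabla f=0$.

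You also overstate the size of this gain. In every surviving term, one factor $N|\nabla f|$ (of size up to $\epsilon^{-1}\lambda N$) is replaced by something of size $\lesssim\lambda$. That factor is either a derivative landing on $g(\lambda r)$, or a second derivative of the phase, for which $|\nabla^2 f|\lesssim\lambda|\nabla f|$ on the support of $g$ (using $h''>0$ there). So, up to powers of $\epsilon$, the product is $\lesssim\lambda^4N^3\|\bar a\|_{L^\infty}^2$ rather than $(\lambda N)^4\|\bar a\|_{L^\infty}^2$. You gain only $N^{-1}$, not $(\epsilon^{-1}\lambda N)^{-1}$, because the power of $\lambda$ is not lowered. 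That is exactly enough: your $N^{3-\beta}$ becomes $N^{2-\beta}$, which is small precisely when $\beta>2$.

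The cancellation must be carried out on the pointwise expansion of the product before any norm is taken. A product estimate that bounds $\nabla^\perp\bar a$ and $\nabla\Delta\bar a$ separately, as Kato--Ponce does, cannot detect it. After the cancellation, the remainder is again a sum of smooth amplitudes supported where $r\sim\lambda^{-1}$, times trigonometric functions of $Nf$ and $2Nf$. Each further derivative, including the outer $\nabla^\perp$, therefore costs at most $\epsilon^{-1}\lambda N$. This gives the $H^{\beta-2}$ bound at integer orders, and interpolation gives it in general.
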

\begin{proof}
Notice that
\[
\bar u(t)=u_0+\int_0^t \nabla^{\perp}(\nabla^{\perp}\bar a(\tau)\cdot\nabla\Delta\bar a(\tau)) \, d\tau.
\]
Thus,
\begin{equation}\notag
\begin{split}
\|\bar u(t)\|_{L^2}\leq& \ \|u_0\|_{L^2}+t\|\nabla^{\perp}(\nabla^{\perp}\bar a\cdot\nabla\Delta\bar a)\|_{L^2}
\end{split}
\end{equation}
For the last term, set 
\[
\bar a (t,r,\theta) = \lambda^{1-\beta}g(\lambda r)\cos(Nf(t,\lambda r,\theta))
\]
where $f(t,\lambda r,\theta)=\theta - \lambda^{4-\beta}th'(\lambda r)$. Then for a differential operator $D$ such as $\nabla$,
\[
D \bar a(t_*) = \lambda^{2-\beta}g'(\lambda r) (D r)\cos(Nf(t_*,\lambda r,\theta))-\lambda^{1-\beta}g(\lambda r) \sin (Nf(t_*,\lambda r,\theta)) N\lambda  D f(t_*,\lambda r,\theta).
\]
Note the dominating terms, i.e. the terms with most gaining factors, are the last term, which comes from differentiation on trigonometric function with $f(t,\lambda r,\theta)$ inside. So, $\nabla^\perp \bar a$ and $\nabla \Delta \bar a$ have dominating terms as the following
\begin{align*}
\nabla^\perp \bar a(t_*,\lambda r,\theta) &= -\lambda^{1-\beta}g(\lambda r) \sin (Nf(t_*,\lambda r,\theta)) \lambda N \nabla^\perp f(t_*,\lambda r,\theta) + o(\lambda N), \\
\nabla\Delta \bar a(t_*,\lambda r,\theta) &= \lambda^{1-\beta}g(\lambda r) \sin (Nf(t_*,\lambda r,\theta)) (\lambda N)^3 |\nabla f(t_*,\lambda r,\theta)|^2 \nabla f(t_*,\lambda r,\theta) + o((\lambda N)^3).
\end{align*}
Then note the dominating terms of $\nabla^\perp \bar a$ and $\nabla \Delta \bar a$ are orthogonal
\begin{equation}\label{eq-cancel-orthog}
\lambda^{2-2\beta}g^2(\lambda r) \sin^2 (Nf(t_*,\lambda r,\theta)) (N\lambda)^4 |\nabla f(t_*,\lambda r,\theta)|^2 \nabla^\perp f(t_*,\lambda r,\theta)\cdot\nabla f(t_*,\lambda r,\theta) = 0,
\end{equation}
which allows us to lower the power of dominating gaining factors. Then, our dominating gaining factor of $\nabla^\perp \bar a \cdot \nabla \Delta \bar a$ becomes $\lambda^4 N^3$ instead of $(\lambda N)^4$, losing a factor of $N$. The power of $\lambda$ is not lowered due to the high derivative on $g(\lambda r)$. Note $\|\bar a\|_{L^\infty}\lesssim \lambda^{1-\beta}N^{-\beta}$ with $|\text{supp} \ \bar a| \sim \lambda^{-2}$, so for $\nabla^{\perp}(\nabla^{\perp}\bar a\cdot\nabla\Delta\bar a)$ on $0\le t\le t_*$,
\[
\|\nabla^{\perp}(\nabla^{\perp}\bar a\cdot\nabla\Delta\bar a)\|_{L^2}
\lesssim (\lambda^{-2})^{1/2}\|\nabla^{\perp}(\nabla^{\perp}\bar a\cdot\nabla\Delta\bar a)\|_{L^\infty} \lesssim \lambda^5 N^4\lambda^{1-2\beta}N^{-2\beta} = \lambda^{6-2\beta}N^{4-2\beta}.
\]
Thus
\begin{equation}\label{bar-u-tN}
\begin{split}
\|\bar u(t)\|_{L^2}\le \|u_0\|_{L^2} + t_*\|\nabla^{\perp}(\nabla^{\perp}\bar a\cdot\nabla\Delta\bar a)\|_{L^2} \lesssim \|u_0\|_{L^2} + \lambda^{\beta-4} \lambda^{6-2\beta}N^{4-2\beta}.
\end{split}
\end{equation}
For higher norm than \eqref{bar-u-tN}, add a gaining factor of $\lambda N$ for each derivative,
\begin{equation}\label{bar-u-tN-2}
\begin{split}
\|\bar u(t)\|_{H^{\beta-2}}\lesssim \|u_0\|_{H^{\beta-2}}
+\lambda^{\beta-4} (\lambda N)^{\beta-2} \lambda^{6-2\beta}N^{4-2\beta} = \|u_0\|_{H^{\beta-2}} + N^{2-\beta}
\end{split}
\end{equation}
whose last term vanishes as $N$ grows if $\beta > 2$.
\end{proof}

\begin{remark}
In order to utilize the above orthogonal cancellation, we need the gaining factor $\lambda N$ from the trigonometric function that dominates $\lambda$, which is the reason why one cannot simply set $N=1$.
\end{remark}
\begin{remark}
This lemma is not necessary since we will directly prove the smallness of $u$. However, we keep it as an illustration of earning a small factor through orthogonal cancellation, which is the key to controlling error in Lemma \ref{le-perturbation} below.
\end{remark}

\section{Error equation and estimate of error}

Recall $u=\nabla^\perp b$, and set
\begin{equation}\label{eq:def-A}
A:=a-\bar a.
\end{equation}
Subtracting \eqref{eq:approx} from the $a$-equation in \eqref{eq:resistive-25d-u} gives
\begin{equation}\label{eq:A-eqn}
A_t+u\cdot\nabla A+\mu(-\Delta)^\alpha A
=-(u-u_0)\cdot\nabla\bar a\;-\;\mu(-\Delta)^\alpha\bar a,\qquad A(0)=0.
\end{equation}
We proceed by deriving the energy estimate of $A$ as the following:
\begin{lemma}[Energy for $A$]\label{lem:A-energy}
Let $A$ solve \eqref{eq:A-eqn}. Then for $m \ge 0$ and $1\le p \le \infty$,
\begin{align}
\frac{d}{dt}\|A\|_{\dot{H}^{m}} \lesssim& \sum_{m'=1}^m\| u\|_{\dot{W}^{m',\infty}}\|A\|_{\dot{H}^{m-m'+1}}
+ 
\sum_{m'=0}^m\|u-u_0\|_{\dot{H}^{m'}}\|\bar a\|_{\dot{W}^{m-m'+1,\infty}} + \mu\|\Lambda^{2\alpha+m} \bar a\|_{L^2}.
\end{align}
Using Gr\"onwall, for $m=0,1$,
\[
\|A\|_{\dot{H}^m} \lesssim \int_0^t  \left(\sum_{m'=0}^m\|u(\tau)-u_0\|_{\dot{H}^{m'}}\|\bar{a}(\tau)\|_{\dot{W}^{m-m'+1,\infty}} + \mu\|\Lambda^{2\alpha+m} \bar a(\tau)\|_{L^2} \right) e^{\int_\tau^t\|u(\tau')\|_{\dot{W}^{1,\infty}}d\tau'}d\tau,
\]
and for $m \ge 2$,
\begin{align*}
\|A\|_{\dot{H}^m} \lesssim \int_0^t&  \big(\sum_{m'=2}^m\|u\|_{\dot{W}^{m',\infty}}\|A\|_{\dot{H}^{m-m'+1}}+\sum_{m'=0}^m\|u(\tau)-u_0\|_{\dot{H}^{m'}}\|\bar{a}(\tau)\|_{\dot{W}^{m-m'+1,\infty}} \\
&+ \mu\|\Lambda^{2\alpha+m} \bar a(\tau)\|_{L^2} \big)e^{\int_\tau^t\|u(\tau')\|_{\dot{W}^{1,\infty}}d\tau'}d\tau.
\end{align*}
\end{lemma}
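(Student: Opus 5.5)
We prove the differential inequality by a standard $\dot H^m$ energy estimate on \eqref{eq:A-eqn}, and then obtain the integrated bounds by Gr\"onwall.

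\textbf{Step 1: the energy identity.} Apply $\Lambda^m$ (for integer $m$, equivalently $D^m$ componentwise) to \eqref{eq:A-eqn} and pair with $\Lambda^m A$ in $L^2$. The dissipative term gives $\mu\|\Lambda^{m+\alpha}A\|_{L^2}^2\ge 0$, which we drop. This yields
\[
\tfrac12\tfrac{d}{dt}\|A\|_{\dot H^m}^2 \le -\langle \Lambda^m(u\cdot\nabla A),\Lambda^m A\rangle + \|\Lambda^m((u-u_0)\cdot\nabla\bar a)\|_{L^2}\|A\|_{\dot H^m} + \mu\|\Lambda^{2\alpha+m}\bar a\|_{L^2}\|A\|_{\dot H^m}.
\]
Dividing by $\|A\|_{\dot H^m}$ (rigorously, by differentiating $(\|A\|^2_{\dot H^m}+\delta)^{1/2}$ and letting $\delta\to0$) gives the claimed form.

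\textbf{Step 2: the transport term.} Since $u=\nabla^\perp b$ is divergence free, $\langle u\cdot\nabla\Lambda^m A,\Lambda^m A\rangle=0$. Hence only the commutator $[\Lambda^m,u\cdot\nabla]A$ contributes. By the Leibniz rule, it is a sum of terms $D^{m'}u\cdot\nabla D^{m-m'}A$ with $1\le m'\le m$. We put $D^{m'}u$ in $L^\infty$ and the $A$-factor in $L^2$, which gives $\sum_{m'=1}^m\|u\|_{\dot W^{m',\infty}}\|A\|_{\dot H^{m-m'+1}}$. For the source term, the Leibniz rule again gives $\sum_{m'=0}^m\|D^{m'}(u-u_0)\|_{L^2}\|D^{m-m'+1}\bar a\|_{L^\infty}$. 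For non-integer $m$, one would instead use the Kato--Ponce commutator and product estimates, and the result is interpreted at the endpoint indices. Since the paper only uses the estimate up to constants, I would state it for integer $m$ and interpolate where needed.

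\textbf{Step 3: Gr\"onwall.} For $m=0,1$, the only $A$-term on the right is $\|u\|_{\dot W^{1,\infty}}\|A\|_{\dot H^m}$; for $m=0$ there is no commutator at all. So $y=\|A\|_{\dot H^m}$ satisfies $y'\le \|u\|_{\dot W^{1,\infty}}y+F$ with $y(0)=0$. The integrating factor gives $y(t)\le\int_0^t F(\tau)e^{\int_\tau^t\|u\|_{\dot W^{1,\infty}}}d\tau$, which is the stated bound. For $m\ge2$, we separate out the $m'=1$ term $\|u\|_{\dot W^{1,\infty}}\|A\|_{\dot H^m}$ and move the terms with $m'\ge2$, which involve only lower norms of $A$, into the forcing. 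The same integrating-factor argument then gives the second formula.

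The only delicate point is the commutator. One must make sure that no term carries $m+1$ derivatives on $A$; this cancellation is exactly the divergence-free property. We must also make sure that, at the highest order, only $\nabla u$ in $L^\infty$ appears. Everything else is routine.
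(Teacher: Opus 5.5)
Your proposal is correct and follows the paper's proof essentially step for step. Like the paper, you run a $D^m$ energy estimate on \eqref{eq:A-eqn}, kill the top-order transport term with $\nabla\cdot u=0$, apply Leibniz and H\"older to the remaining commutator and source terms, then drop the dissipation and divide by $\|A\|_{\dot H^m}$; your Step 3 simply spells out the Gr\"onwall step the paper leaves implicit, putting the $m'=1$ term into the exponential and treating the $m'\ge 2$ terms as forcing.
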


\begin{proof}
Notice that
\begin{align*}
\frac{1}{2}\frac{d}{dt}\|D^mA\|_{L^2}^2+\mu\|\Lambda^{\alpha+m} A\|_{L^2}^2
=&-\int_{\mathbb R^2} D^mA D^m(u\cdot\nabla A)-\int_{\mathbb R^2} D^m A D^m\left((u-u_0)\cdot\nabla\bar a\right) \\
&-\mu \int_{\mathbb R^2} D^m A D^m(-\Delta)^\alpha\bar a.    
\end{align*}
Now study right side term by term:
\begin{align*}
-\int_{\mathbb R^2} D^mA D^m(u\cdot\nabla A) =& \sum_{m'=1}^m \int_{\mathbb R^2} D^m A \left( D^{m'}u \cdot D^{m-m'} \nabla A \right) \\
\le& \|D^mA\|_{L^2} \sum_{m'=1}^m\|D^{m'} u\|_{L^\infty}\|\nabla A\|_{H^{m-m'}}
\end{align*}
where if $m'=0$, then divergence-free property of $u$ will lead to cancellation. And similarly
\begin{align*}
-\int_{\mathbb R^2} D^m A D^m\left((u-u_0)\cdot\nabla\bar a\right) &\le \|D^mA\|_{L^2}\sum_{m'=0}^m\|u-u_0\|_{H^{m'}}\|\nabla D^{m-m'} \bar a\|_{L^\infty} \\
-\mu \int_{\mathbb R^2} D^m A D^m(-\Delta)^\alpha\bar a &\le \mu \|D^m A\|_{L^2}\|\Lambda^{2\alpha+m} \bar a\|_{L^2}.
\end{align*}
Collecting all of these, droping the dissipation on the left and dividing both sides by $\|D^m A\|_{L^2}$ completes the proof.
\end{proof}

A key point is that the estimate for $A$ in Lemma \ref{lem:A-energy} requires control of both $u-u_0$ and $u$. In turn, these quantities are driven by $a$ through the $u$-equation. More precisely, for any $m\ge 0$, the standard $\dot H^m$ energy estimate gives
\begin{equation}\label{eq-de-u}
\begin{split}
\frac12\frac{d}{dt}\|u\|_{\dot H^m}^2+\nu\|\Lambda^\alpha u\|_{\dot H^m}^2
&\le
\bigl\|\nabla^\perp(\nabla^\perp a\cdot \nabla\Delta a)\bigr\|_{\dot H^m}\,\|u\|_{\dot H^m}.
\end{split}
\end{equation}
Dropping the dissipative term and integrating in time, we obtain formally, and then by continuity,
\begin{equation}\label{eq:u-growth-heuristic}
\begin{split}
\|u(t)-u_0\|_{\dot H^m}
&\le \int_0^t \bigl\|\nabla^\perp(\nabla^\perp a(\tau)\cdot \nabla\Delta a(\tau))\bigr\|_{\dot H^m}\,d\tau,\\
\|u(t)\|_{\dot H^m}
&\le \|u_0\|_{\dot H^m}
+\int_0^t \bigl\|\nabla^\perp(\nabla^\perp a(\tau)\cdot \nabla\Delta a(\tau))\bigr\|_{\dot H^m}\,d\tau.
\end{split}
\end{equation}
We may lower the power of gaining factors of the main integral part like in \eqref{eq-cancel-orthog}. When $m=0$,
\begin{align*}
\int_0^{t_*}\lVert \nabla^\perp (\nabla^\perp a \cdot \Delta \nabla a)\rVert_{L^2}  dt
\leq& t_* \lVert \nabla^\perp (\nabla^\perp \bar{a} \cdot \Delta \nabla \bar{a})\rVert_{L^2}  \\
&+t_*\lVert\nabla^\perp (\nabla^\perp (a-\bar{a}) \cdot \Delta \nabla \bar{a}) \rVert_{L^2}  \\
&+t_*\lVert\nabla^\perp (\nabla^\perp a \cdot \Delta \nabla (a-\bar{a}))\rVert_{L^2}.
\end{align*}
For larger $m>0$, above is multiplied by the gaining factor $(\lambda N)^m$. The first term is controlled as \eqref{eq-cancel-orthog}. For the other two, in turn we need $A=\bar a - a$ to be controlled. Thus, to break the loop, we will use a bootstrap argument assuming $A$ is initially small in the proof below.

\begin{lemma}\label{le-perturbation}
Let $3 < \beta<4-2\alpha$, $0\le\alpha<1/2$. There exist large $\lambda>0$ and $N>0$ such that for $s \ge 0$,
\begin{equation}\label{eq-est-A}
\|A(t_*)\|_{\dot{H}^s} \lesssim \lambda^{s-\beta}N^{s-\beta-1},
\end{equation}
and
\begin{equation}\label{eq-est-u}
\|u(t)\|_{H^{\beta-2}}\lesssim \|u_0\|_{H^{\beta-2}},
\qquad
\forall\, t\in[0,t_*].
\end{equation}
\end{lemma}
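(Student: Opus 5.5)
The plan is a bootstrap (continuity) argument on $[0,t_*]$, coupling the energy estimate for $A$ in Lemma \ref{lem:A-energy} with the $u$-estimate \eqref{eq:u-growth-heuristic}. As bootstrap hypotheses on a maximal interval $[0,T]\subset[0,t_*]$ I take, for a finite range of integer $m$ (say $0\le m\le \lceil\beta\rceil+1$, interpolating for non-integer $s$),
\[
\|A(t)\|_{\dot H^m}\le C_0\,\lambda^{m-\beta}N^{m-\beta-1},\qquad \|u(t)\|_{\dot W^{1,\infty}}\le C_0\,\lambda^{4-\beta},\qquad \|u(t)-u_0\|_{\dot H^m}\le C_0\,\lambda^{m-\beta}(\lambda N)^{\kappa}N^{-1}
\]
for a suitable $\kappa$ (with room chosen so that the $u-u_0$ bound is a genuine power of $N$ smaller than the corresponding $\bar a$-driven scale). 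All three hold at $t=0$ because $A(0)=0$ and $u(0)=u_0$, and I will then improve each constant by a factor $1/2$ for $\lambda,N$ large, using $\lambda^{\beta+2\alpha-4}=N^{-100}$.

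The first step closes the $A$ bound. The Gr\"onwall exponential is $\exp(\int_0^{t_*}\|u\|_{\dot W^{1,\infty}})\lesssim \exp(t_*\lambda^{4-\beta})=O(1)$ by the choice of $t_*$ (with $\epsilon$ fixed). The forcing has two pieces. First, the dissipative forcing satisfies $t_*\mu\|\Lambda^{2\alpha+m}\bar a\|_{L^2}\lesssim \lambda^{\beta-4}(\lambda N)^{2\alpha+m}\lambda^{-\beta}N^{-\beta}$ by \eqref{eq-abar-Wsp}. This is $\lambda^{m-\beta}N^{m-\beta}\cdot \lambda^{\beta+2\alpha-4}N^{2\alpha}$, which is $\ll \lambda^{m-\beta}N^{m-\beta-1}$ since $\lambda^{\beta+2\alpha-4}=N^{-100}$. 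This is exactly where $\beta<4-2\alpha$ enters. Second, the coupling $\|u-u_0\|_{\dot H^{m'}}\|\bar a\|_{\dot W^{m-m'+1,\infty}}$ is estimated with the bootstrap bound on $u-u_0$ together with \eqref{eq-abar-Wsp} at $p=\infty$. The higher-order terms $\|u\|_{\dot W^{m',\infty}}\|A\|_{\dot H^{m-m'+1}}$ with $m'\ge2$ are handled inductively in $m$: $\|u\|_{\dot W^{m',\infty}}$ is bounded by $u_0$'s size $\lambda^{3+m'-\beta}$ plus Sobolev embedding of the $u-u_0$ bootstrap, and $A$ at lower order is already controlled.

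The second step, which I expect to be the main obstacle, closes the $u$ bounds via \eqref{eq:u-growth-heuristic}, splitting $\nabla^\perp a\cdot\nabla\Delta a$ into $\bar a\bar a$, $A\bar a$, $aA$ pieces as displayed before the lemma.
\begin{itemize}
\item The $\bar a\bar a$ piece uses the orthogonal cancellation \eqref{eq-cancel-orthog} exactly as in Lemma \ref{small-ubar}, giving the gain $N^{-1}$ and hence $t_*\|\cdot\|_{\dot H^m}\lesssim \lambda^{m-\beta}N^{m+2-2\beta}$, which is small for $\beta>3$.
\item The cross pieces contain no cancellation, but $A$ carries an extra $N^{-1}$ relative to $\bar a$. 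I would bound them by H\"older/Kato--Ponce, putting $A$ in $\dot H$ and $\bar a$ in $\dot W^{k,\infty}$. This gives roughly $t_*\lambda^{5+m-2\beta}N^{4+m-2\beta-1}\cdot\lambda$, i.e. the same scaling as the main term, which is small provided $\beta>3$.
\item The term $\nabla^\perp A\cdot\nabla\Delta A$ is quadratic in $A$ and hence even smaller.
\end{itemize}
The delicate bookkeeping is that $\|u\|_{\dot W^{1,\infty}}$ must not exceed $\lambda^{4-\beta}$. Here I would use embedding $\dot H^{2+\delta}\cap\dot H^{1}\subset \dot W^{1,\infty}$ on $u-u_0$ and check that the power of $N$ is negative, which again needs $\beta>3$.

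Finally, evaluating at $t=t_*$ gives \eqref{eq-est-A}, and \eqref{eq-est-u} follows from $\|u-u_0\|_{H^{\beta-2}}\lesssim N^{3-\beta}\cdot$(powers of $\lambda$ bounded by $\|u_0\|_{H^{\beta-2}}\simeq\epsilon$) once $N$ is large. I would first try to verify all the exponent inequalities at $m=\beta$ and $m=1$, since these are the extreme cases.
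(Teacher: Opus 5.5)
\textbf{The finite-range bootstrap cannot close, because the system loses derivatives.}

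Your ingredients are the paper's:
\begin{itemize}
\item orthogonal cancellation \eqref{eq-cancel-orthog} for the $\bar a\bar a$ piece;
\item an extra $N^{-1}$ from the bootstrap on $A$ in the cross terms;
\item a Gr\"onwall factor that is $O(1)$ because $t_*\|u_0\|_{\dot W^{1,\infty}}\simeq\epsilon^{-1}$;
\item dissipative forcing killed by $\lambda^{\beta+2\alpha-4}=N^{-100}$;
\item $\beta>3$ entering through $t_*\|u-u_0\|_{\dot W^{1,\infty}}\lesssim N^{6-2\beta}$.
\end{itemize}
Your observation that $u_0$ costs only $\lambda$ per derivative is a good one. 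It makes $t_*\|u_0\|_{\dot W^{m',\infty}}\|A\|_{\dot H^{m-m'+1}}$ of relative size $\epsilon^{-1}N^{1-m'}$, which is cleaner than the paper's $\epsilon^{100-2s}$ induction.

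The problem is closure over $0\le m\le\lceil\beta\rceil+1$. To bound $\|u-u_0\|_{\dot H^m}$ via \eqref{eq:u-growth-heuristic}, you must estimate $\nabla^\perp(\nabla^\perp\bar a\cdot\nabla\Delta A)$ and $\nabla^\perp(\nabla^\perp A\cdot\nabla\Delta A)$ in $\dot H^m$. No orthogonality is available there. The Leibniz term with all $m+1$ outer derivatives on $\nabla\Delta A$ costs $\|\nabla\bar a\|_{L^\infty}\|A\|_{\dot H^{m+4}}$. Conversely, closing $\|A\|_{\dot H^m}$ needs $u$ at level $m$: either the term $\|u\|_{\dot W^{m,\infty}}\|A\|_{\dot H^1}$ in Lemma \ref{lem:A-energy}, or $\|u\|_{\dot H^m}\|\nabla A\|_{L^\infty}$ via Kato--Ponce. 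So level $m$ of $u$ needs level $m+4$ of $A$, which needs level $m+4$ of $u$, and so on; no finite range closes.

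This already bites for your central hypothesis $\|u\|_{\dot W^{1,\infty}}\le C_0\lambda^{4-\beta}$. Closed through $\dot H^{2+\delta}$, it requires $\|A\|_{\dot H^{6+\delta}}$, outside $m\le 5$.

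The paper confronts this differently:
\begin{itemize}
\item it imposes \eqref{bootsrap} for every $s\ge 0$ simultaneously;
\item it derives the bounds on $u$ and $u-u_0$ from that, rather than bootstrapping them separately;
\item it uses that $A$, $\bar a$ and $u-u_0$ all cost $\lambda N$ per derivative, so whatever level a product estimate calls for is available.
\end{itemize}
Closing infinitely many levels with one fixed $N$ then requires the closing constants to be controlled uniformly in $s$. With a finite range you would need some other device to absorb the top-order loss, and the proposal supplies none.

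\textbf{Your scaling for $u-u_0$ is also wrong.} Since $u-u_0$ is generated by $a$, each derivative costs $\lambda N$, not $\lambda$. What actually comes out is $\|u-u_0\|_{\dot H^m}\lesssim\lambda^{m+2-\beta}N^{m+4-2\beta}$ (cf.\ \eqref{eq-est-u-u_0-L2}), not the $\lambda^{m-\beta}N^{m+2-2\beta}$ you wrote for the $\bar a\bar a$ piece. Closing $\|A\|_{L^2}$ through $t_*\|u-u_0\|_{L^2}\|\bar a\|_{\dot W^{1,\infty}}$ requires $\|u-u_0\|_{L^2}\ll\lambda^{2-\beta}N^{-2}$. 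The room between this and $\lambda^{2-\beta}N^{4-2\beta}$ is exactly where $\beta>3$ is used.

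An ansatz $\lambda^{m-\beta}(\lambda N)^{\kappa}N^{-1}$ with one fixed $\kappa$ grows only like $\lambda^m$. It must dominate the forcing at level $m$ and also satisfy the $L^2$ requirement. Dividing the two conditions forces $m<2\beta-6<2$, so it fails for every $m\ge 2$.

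Finally, the $AA$ term is not smaller than the cross terms. $L^\infty$ bounds on derivatives of $A$ come from Sobolev embedding, which loses exactly the extra $N^{-1}$. That term is therefore of the same size as the $A\bar a$ terms, which is still acceptable.
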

\begin{proof}
The main strategy is a bootstrap argument. Since $A(0)=0$, we can safely assume in a short time $0\le t\le t_0$ for some $M>0$ we have the following
\begin{equation}\label{bootsrap}
\boxed{\textbf{Bootstrap assumption:} \quad \|A\|_{\dot H^{s}} \le 2M \lambda^{s-\beta}N^{s-\beta-1}, \quad t \in[0,t_{0}]. }
\end{equation}
Our goal is to show 
\[
\|A\|_{\dot H^{s}} \le \epsilon^{100-2s}M \lambda^{s-\beta}N^{s-\beta-1}, \quad t \in[0,t_{0}],
\]
which would close the bootstrap, and we could extend $t_0$ to $t_*$ through a continuity argument. In particular, then \eqref{eq-est-A} is proved. The choice of a smaller constant $\epsilon^{100-2s}M < 2M$ is slightly strange, but is needed later in induction. Since $\epsilon$ is already fixed while $\lambda$ and $N$ are free to be large, below we will not track $\epsilon$ until the estimate of $\|A\|_{\dot H^m}$ for $m \ge 2$, where the strange bootstrap constant is used. With \eqref{bootsrap}, and Lemma \ref{le-a-osc}, we have
\begin{equation}\label{eq-bootstrap-a}
    \|a\|_{\dot H^{s}} \le \|\bar a\|_{\dot H^{s}} + \|A\|_{\dot H^{s}} \lesssim \lambda^{s-\beta}N^{s-\beta}, \quad t \in[0,t_{0}].
\end{equation}
With the bootstrap assumption above, we could first prove \eqref{eq-est-u} like in Lemma \ref{small-ubar}, while now we cannot use the orthogonal cancellation brought by the structure of $\bar a$. From \eqref{eq-de-u},
\begin{align*}
\|u(t)\|_{L^2} \le& \|u_0\|_{L^2} + t_*\|\nabla^{\perp}(\nabla^{\perp} a\cdot\nabla\Delta a)\|_{L^2} \\
\le& \|u_0\|_{L^2} + t_*\sum_{k=1,2} \|D^k a\|_{L^{\infty}} \|D^{5-k}a\|_{L^2}  \\
\lesssim& \|u_0\|_{L^2} + \lambda^{\beta-4} \lambda^{2-\beta}N^{1-\beta}\lambda^{4-\beta}N^{4-\beta} \\
\le& \|u_0\|_{L^2} + \lambda^{2-\beta}N^{5-2\beta}
\end{align*}
where whether $k=1$ or $k=2$ makes no difference since the total gaining factors are the same. For higher norms, multiplying gaining factors $(\lambda N)^{\beta-2}$ yields
\[
\|u(t)\|_{H^{\beta-2}} \lesssim \|u_0\|_{H^{\beta-2}} + N^{3-\beta}, \quad t \in[0,t_{0}],
\]
whose last term vanishes as $N$ grows if $\beta>3$. Thus \eqref{eq-est-u} is proved if we could close the bootstrap.

\noindent$\mathbf{L^2}$ \textbf{case:}
\[
\|A\|_{L^2} \lesssim \int_0^t  \left(\|u(\tau)-u_0\|_{L^2}\|\bar{a}(\tau)\|_{\dot{W}^{1,\infty}} + \mu\|\Lambda^{2\alpha+1} \bar a(\tau)\|_{L^2} \right) e^{\int_\tau^t\|u(\tau')\|_{\dot{W}^{1,\infty}}d\tau'}d\tau.
\]
For the exponential, Sobolev embedding yields
\begin{equation}
\begin{split}
\int_0^{t_*} \|u\|_{\dot W^{1,\infty}} \le& t_* \|u\|_{\dot W^{1,\infty}} \\
\lesssim& t_*\int_0^{t_*}\|\nabla^\perp(\nabla^\perp a \cdot \nabla \Delta a)\|_{ H^{2+\delta}} + t_* \|u_0\|_{\dot W^{1,\infty}} \\
\lesssim& t_*^2 \|\nabla^\perp(\nabla^\perp \bar a \cdot \nabla \Delta \bar a)\|_{ H^{2+\delta}} + t_*^2 \|\nabla^\perp(\nabla^\perp A \cdot \nabla \Delta \bar a)\|_{H^{2+\delta}} \\
&+ t_*^2 \|\nabla^\perp(\nabla^\perp a \cdot \nabla \Delta A)\|_{H^{2+\delta}} + t_* \|u_0\|_{\dot W^{1,\infty}}
\end{split}
\end{equation}
for any small $\delta>0$. The first term can be estimated as in \eqref{eq-cancel-orthog}, lowering one power of $N$,
\[
t_*^2 \|\nabla^\perp(\nabla^\perp \bar a \cdot \nabla \Delta \bar a)\|_{H^{2+\delta}} \lesssim \lambda^{2\beta-8} \lambda^{6-2\beta}N^{5-1-2\beta}(\lambda N)^{2+\delta} = \lambda^{\delta}N^{6+\delta-2\beta},
\]
and the two terms in the middle are similar: Although now we cannot lower a power of $N$ using orthogonality, we can still lower a power using bootstrap assumption \eqref{bootsrap}. Together with estimate of $\bar a$ \eqref{le-a-osc}, and estimate of $a$ \eqref{eq-bootstrap-a}, we have
\begin{align*}
&t_*^2 \|\nabla^\perp(\nabla^\perp A \cdot \nabla \Delta \bar a)\|_{H^{2+\delta}} + t_*^2 \|\nabla^\perp(\nabla^\perp a \cdot \nabla \Delta A)\|_{H^{2+\delta}}  \\
\lesssim& t_*^2\|D A\|_{L^2}\|D^{6+\delta} \bar a\|_{L^\infty} + t_*^2 \|D^{3}A\|_{L^2}\|D^{4+\delta}a\|_{H^{1+\delta}}  \\
\lesssim& \lambda^{2\beta-8} (\lambda N)\lambda^{-\beta}N^{-\beta-1} (\lambda N)^{6+\delta} \lambda^{1-\beta}N^{-\beta} + \lambda^{2\beta-8} (\lambda N)^{3}\lambda^{-\beta}N^{-\beta-1} (\lambda N)^{5+2\delta} \lambda^{-\beta}N^{-\beta} \\
\lesssim& \lambda^{2\beta-8} \lambda^{6-2\beta}N^{5-1-2\beta} (\lambda N)^{2+2\delta}=\lambda^{2\delta}N^{6+2\delta-2\beta}.
\end{align*}
Note $A$, $a$, $\bar a$ have the same gaining factor $\lambda N$, so the way to distribute derivatives does not matter. Moreover, the last term is harmless since $t_* \gtrsim \frac{1}{\|u_0\|_{C^1}}$, and
\[
t_* \|u_0\|_{\dot W^{1,\infty}} \lesssim \lambda^{\beta-4}\lambda^{4-\beta}=1.
\]
If we choose $\delta=\delta(\lambda) = \min \{ \log_\lambda 2,\log_N 2\}$ small, then
\begin{equation}\label{eq-exp-bdd}
\int_0^{t_*} \|u\|_{\dot W^{1,\infty}} \lesssim t_*\|u\|_{\dot W^{1,\infty}} \lesssim N^{6-2\beta} + 1 \lesssim 1
\end{equation}
if $\beta\ge3$. Thus,
\[
e^{\int_0^{t_*} \|u\|_{\dot W^{1,\infty}}} \lesssim 1.
\]
Also note that
\begin{equation}\label{eq-est-u-u_0-L2}
\begin{split}
\|u-u_0\|_{L^2} \lesssim& t_* \|\nabla^\perp(\nabla^\perp \bar a \cdot \nabla \Delta \bar a)\|_{L^2} + t_* \|\nabla^\perp(\nabla^\perp A \cdot \nabla \Delta \bar a)\|_{L^2}
+ t_* \|\nabla^\perp(\nabla^\perp a \cdot \nabla \Delta A)\|_{L^2} \\
\lesssim& \lambda^{\beta-4}\lambda^{5-2\beta}N^{4-2\beta} + t_* \|D A\|_{L^2} \|D^4 \bar a\|_{L^\infty} + t_* \|D^3 A\|_{L^2} \|D^2 a\|_{H^{1+\delta}} \\
\lesssim& \lambda^{\beta-4}\lambda^{6-2\beta}N^{4-2\beta} + \lambda^{\beta-4}\lambda^{6-2\beta+\delta}N^{4-2\beta+\delta} \\\lesssim& \lambda^{2-\beta+\delta}N^{4-2\beta+\delta} \simeq \lambda^{2-\beta}N^{4-2\beta}
\end{split}
\end{equation}
under the $\delta$ chosen above. Again, $A$, $a$, $\bar a$ have the same gaining factor $\lambda N$, so the way to distribute derivatives does not matter. Finally, the $L^2$ norm satisfies
\begin{align*}
\|A\|_{L^2} \lesssim& \int_0^{t_*} \|u-u_0\|_{L^2} \|\bar a\|_{\dot W^{1,\infty}} + \mu\|\Lambda^{2\alpha}\bar a\|_{L^2} \\
\lesssim& t_* \|u-u_0\|_{L^2} \|\bar a\|_{\dot W^{1,\infty}} + \mu t_*\|\Lambda^{2\alpha}\bar a\|_{L^2} \\
\lesssim& \lambda^{\beta-4}\lambda^{2-\beta}N^{4-2\beta}(\lambda N) \lambda^{1-\beta}N^{-\beta} + \lambda^{\beta-4} (\lambda N)^{2\alpha} \lambda^{-\beta}N^{-\beta} \\
=& \lambda^{-\beta}N^{-\beta-1}N^{6-2\beta} + \lambda^{-\beta}\lambda^{\beta+2\alpha-4}N^{2\alpha-\beta}.
\end{align*}
Set $\lambda^{\beta+2\alpha-4}=N^{-100}$ and assume $\beta > 3$, we have
\[
\|A\|_{L^2} \le C \lambda^{-\beta}N^{-\beta-1} N^{-r}
\]
for small $r=2\beta-6>0$. Therefore, taking $N$ large such that $CN^{-r} < M$, the bootstrap is closed in $L^2$.

\noindent$\mathbf{\dot H^1}$ \textbf{case:}
\[
\|A\|_{\dot{H}^1} \lesssim \int_0^t  \left(\sum_{m'=0}^1\|u(\tau)-u_0\|_{\dot{H}^{m'}}\|\bar{a}(\tau)\|_{\dot{W}^{2-m',\infty}} + \mu\|\Lambda^{2\alpha+1} \bar a(\tau)\|_{L^2} \right) e^{\int_\tau^t\|u(\tau')\|_{\dot{W}^{1,\infty}}d\tau'}d\tau
\]
is similar to $L^2$ case, except a sum $\sum_{m'=0}^1\|u(\tau)-u_0\|_{\dot{H}^{m'}}\|\bar{a}(\tau)\|_{\dot{W}^{2-m',\infty}}$. However, note both $u-u_0$ and $\bar a$ have the same gaining factor $\lambda N$, thus it suffices to study just one term in the sum. In other words,
\[
\|A\|_{\dot{H}^1} \lesssim \int_0^t  \left(\|u(\tau)-u_0\|_{L^2}\|\bar{a}(\tau)\|_{\dot{W}^{2,\infty}} + \mu\|\Lambda^{2\alpha+1} \bar a(\tau)\|_{L^2} \right) e^{\int_\tau^t\|u(\tau')\|_{\dot{W}^{1,\infty}}d\tau'}d\tau.
\]
Then check the boundedness of the exponential follows from \eqref{eq-exp-bdd}, and similarly to in $L^2$ case we have
\begin{align*}
\|u-u_0\|_{L^2} \lesssim& t_* \|\nabla^\perp(\nabla^\perp \bar a \cdot \nabla \Delta \bar a)\|_{L^2} + t_* \|\nabla^\perp(\nabla^\perp A \cdot \nabla \Delta \bar a)\|_{L^2}
+ t_* \|\nabla^\perp(\nabla^\perp a \cdot \nabla \Delta A)\|_{L^2} \\
\lesssim& \lambda^{\beta-4}\lambda^{5-2\beta}N^{4-2\beta} + \lambda^{\beta-4}\lambda^{6-2\beta}N^{4-2\beta} \lesssim \lambda^{2-\beta}N^{4-2\beta}.
\end{align*}
Then,
\begin{align*}
\|A\|_{\dot H^1} \lesssim& \int_0^{t_*} \|u-u_0\|_{L^2} \|\bar a\|_{\dot W^{2,\infty}} + \mu\|\Lambda^{2\alpha+1}\bar a\|_{L^2} \\
\lesssim& t_* \|u-u_0\|_{L^2} \|\bar a\|_{\dot W^{2,\infty}} + \mu t_*\|\Lambda^{2\alpha+1}\bar a\|_{L^2} \\
\lesssim& \lambda^{\beta-4}\lambda^{2-\beta}N^{4-2\beta}(\lambda N)^2 \lambda^{1-\beta}N^{-\beta} + \lambda^{\beta-4} (\lambda N)^{2\alpha+1} \lambda^{-\beta}N^{-\beta} \\
=& \lambda^{1-\beta}N^{1-\beta-1}N^{6-2\beta} + \lambda^{1-\beta}\lambda^{\beta+2\alpha-4}N^{1-\beta+2\alpha}.
\end{align*}
Seting $\lambda^{\beta+2\alpha-4}=N^{-100}$ and assuming $\beta > 3$, yields
\[
\|A\|_{\dot H^1} \le C \lambda^{1-\beta}N^{1-\beta-1} N^{-r}
\]
Thus we close the bootstrap in $\dot H^1$ similarly as in $L^2$ by taking $N$ large.

\noindent$\mathbf{\dot H^m}$ \textbf{case:}
\begin{align*}
\|A\|_{\dot{H}^m} \lesssim \int_0^t&  \big(\sum_{m'=2}^m\|u\|_{\dot{W}^{m',\infty}}\|A\|_{\dot{H}^{m-m'+1}}+\sum_{m'=0}^m\|u(\tau)-u_0\|_{\dot{H}^{m'}}\|\bar{a}(\tau)\|_{\dot{W}^{m-m'+1,\infty}} \\
&+ \mu\|\Lambda^{2\alpha+m} \bar a(\tau)\|_{L^2} \big)e^{\int_\tau^t\|u(\tau')\|_{\dot{W}^{1,\infty}}d\tau'}d\tau.
\end{align*}
The proof is by induction, assume the bootstrap is closed for $s\le m-1$ and prove the bootstrap is closed in $\dot{H}^m$. Then, an interpolation between integers finishes the proof of the lemma. Note that all $u$, $u-u_0$, $A$, and $\bar a$ have gaining factor $\lambda N$. Thus, it suffices to write
\begin{align*}
\|A\|_{\dot{H}^m} \lesssim \int_0^t&  \big(\|u\|_{\dot{W}^{2,\infty}}\|A\|_{\dot{H}^{m-1}}+\|u(\tau)-u_0\|_{L^2}\|\bar{a}(\tau)\|_{\dot{W}^{m+1,\infty}} \\
&+ \mu\|\Lambda^{2\alpha+m} \bar a(\tau)\|_{L^2} \big)e^{\int_\tau^t\|u(\tau')\|_{\dot{W}^{1,\infty}}d\tau'}d\tau.
\end{align*}
Exponential is bounded by \eqref{eq-exp-bdd}, thus
\[
\|A\|_{\dot{H}^m} \lesssim \int_0^t \|u\|_{\dot{W}^{2,\infty}}\|A\|_{\dot{H}^{m-1}}+\|u(\tau)-u_0\|_{L^2}\|\bar{a}(\tau)\|_{\dot{W}^{m+1,\infty}} + \mu\|\Lambda^{2\alpha+m} \bar a(\tau)\|_{L^2} d\tau.
\]
Notice that, similarly as before, the last two terms above could be estimated as
\begin{equation} \label{eq-high-2-term}
\begin{split}
&\int_0^{t_*} \|u-u_0\|_{L^2} \|\bar a\|_{\dot W^{m+1,\infty}} + \mu\|\Lambda^{2\alpha+1}\bar a\|_{L^2} \\
\lesssim& t_* \|u-u_0\|_{L^2} \|\bar a\|_{\dot W^{2,\infty}} + \mu t_*\|\Lambda^{2\alpha+m}\bar a\|_{L^2} \\
\lesssim& \lambda^{\beta-4}\lambda^{2-\beta}N^{4-2\beta}(\lambda N)^{m+1} \lambda^{1-\beta}N^{-\beta} + \lambda^{\beta-4} (\lambda N)^{2\alpha+m} \lambda^{-\beta}N^{-\beta} \\
=& \lambda^{m-\beta}N^{m-\beta-1}N^{6-2\beta} + \lambda^{m-\beta}\lambda^{\beta+2\alpha-4}N^{m-\beta+2\alpha} \\
\lesssim& \lambda^{m-\beta}N^{m-\beta-1}N^{-r},
\end{split}
\end{equation}
and the first term has fewer powers on $N$ from induction assumption
\begin{align*}
\int_0^t \|u\|_{\dot{W}^{2,\infty}}\|A\|_{\dot{H}^{m-1}} \le& t_*\|u\|_{\dot{W}^{2,\infty}}\|A\|_{\dot{H}^{m-1}}\\
\le&  t_*(\lambda N)\|u\|_{\dot{W}^{1,\infty}}\|A\|_{\dot{H}^{m-1}}\\
\le& C_{u} (\lambda N) \epsilon^{100-2(m-1)}M \lambda^{m-1-\beta}N^{m-1-\beta-1} \\
\le&C_u\epsilon^{100-2(m-1)}M\lambda^{m-\beta}N^{m-\beta-1},
\end{align*}
where in the third line we used \eqref{eq-exp-bdd} and $C_u$ also comes from \eqref{eq-exp-bdd}:
\[
t_*\|u\|_{\dot W^{1,\infty}} \le \tilde C N^{6-2\beta} + t_*\|u_0\|_{\dot W^{1,\infty}} \le C_u.
\]
We need to show $C_u\epsilon^{100-2(m-1)} M < \epsilon^{100-2m}M$, then our proof of $\dot H^m$ case is finished combined with \eqref{eq-high-2-term} and large $N$. Note the first term in above equation goes to zero as $N \to \infty$ so it does not contribute to $C_u$. In other words,
\[
C_u = t_*\|u_0\|_{\dot W^{1,\infty}} = \epsilon^{-2}\lambda^{\beta-4}\epsilon\lambda^{4-\beta}= \epsilon^{-1}
\]
so
\[
C_u\epsilon^{100-2(m-1)} M = \epsilon^{100-2m+1} M< \epsilon^{100-2m}M
\]
for small $\epsilon<1$. For a non-integer $s\ge 0$, interpolation of the above proves \eqref{eq-est-A}, which concludes the proof of lemma.

\end{proof}

\section{Proof of Theorem \ref{thm:main}}
Combining everything above, assuming $3<\beta<4-2\alpha$, for any small $\epsilon>0$,
\[
\|a_0\|_{H^\beta} + \|b_0\|_{H^{\beta-1}} =\|a_0\|_{H^\beta} + \|u_0\|_{H^{\beta-2}} \lesssim \epsilon
\]
by Lemma \ref{est-u0}. By Lemma \ref{le-a-osc} and Lemma \ref{small-ubar},
\[
\|\bar a(t_*)\|_{\dot H^\beta} > \epsilon^{1-\beta}, \qquad \|\bar u(t)\|_{H^{\beta-2}} \lesssim \|u_0\|_{H^{\beta-2}} \lesssim \epsilon, \qquad \forall t\in[0,t_*].
\]
Then, by Lemma \ref{le-perturbation}, choosing large $\lambda$ and $N$, the difference $\bar a -a$ is under control and $u$ is small:
\[
\| a(t_*)\|_{\dot H^\beta} \gtrsim \epsilon^{1-\beta}, \qquad \| u(t)\|_{H^{\beta-2}} \lesssim \| u_0\|_{H^{\beta-2}} \lesssim \epsilon, \qquad \forall t\in[0,t_*]
\]
where all $\lesssim$ and $\gtrsim$ are independent of $\lambda$ and $N$. For any large $\Lambda>0$, take a smaller $\epsilon$ such that $\epsilon^{1-\beta}>\Lambda$. This concludes the proof of Theorem \ref{thm:main}.

\section{References}
\nocite{*}
\printbibliography[heading=none]

\end{document}